\numberwithin{equation}{section}
\newtheorem{thm}{Theorem}
\newtheorem{lem}[thm]{Lemma}
\newtheorem{proposition}{Proposition}
\theoremstyle{remark}
\newtheorem{remark}{Remark}
\newtheorem{corollary}{Corollary}
\title{Verifying the inverse Laplace transform of total expected stock-outs when demand is Poisson}
\author[]{Ettore Settanni}
\affil[]{Institute for Manufacturing, University of Cambridge \\ e-mail: \href{mailto:e.settanni@eng.cam.ac.uk}{e.settanni@eng.cam.ac.uk} }
\date{}  
\begin{document}
	\maketitle
	\begin{abstract}
		This note aims to verify a Laplace transform pair, previously published without proof, concerning the expected stock-out that may occur in a production-inventory systems when demand is Poisson, and the time horizon is finite. Stock-out, or ``backlog'', is understood as the non-negative difference between cumulative demand, a stochastic variable, and production over a finite time interval. The hoped for outcome is greater clarity on how the Laplace transform pair of interest might be arrived at, and a possible improvement in terms of accuracy on the original result.
	\end{abstract}
	\section{Introduction}
	In the context of production-inventory systems, a ``backlog'' or ``stock-out''  is understood to be the non-negative difference between cumulative demand and  production over a time interval. When demand is probabilistic, quantifying stock-out is equivalent to estimating the number of excess demand in a given time interval via a loss function. For the ``textbook'' case in which demand is normally distributed see \citet[Ch.5]{Nahmias.2005}.
	
	Consider the case in which production is just a given quantity $P=1,2,\dots$, and demand is a Poisson random variable with known parameter $\lambda$. In this case, previous work suggests the following relationship between ``cumulative'' expected backlog and its Laplace transform:
	\begin{equation}
		\label{eq1}
		\begin{split}
			\mathcal{L}^{-1}\left\{\left(\frac{\lambda}{\lambda+s}\right)^P\frac{\lambda}{s^3} \right\} & = \frac{\lambda^{P+1}}{(P-1)!}\iiint{e^{-\lambda t}t^{P-1} (dt)^{3}} \\
			&=\frac{\lambda t^{2}}{2}-Pt+\frac{P(P+1)}{2\lambda} - \frac{e^{\lambda t}}{2\lambda} \biggl[P(P+1) \\
			&\textcolor{white}{==}\sum_{j=0}^{P-1} \frac{(\lambda t)^j}{j!} -2P\sum_{j=0}^{P-2} \frac{(\lambda t)^{j+1}}{j!} +\sum_{j=0}^{P-3} \frac{(\lambda t)^{j+2}}{j!} \biggr] 			
		\end{split}
	\end{equation}
	The identity in Eq.~\ref{eq1} combines, \emph{without altering} them, Eq.~2.13 in \citet{Grubb2} and Eqs.~7.4-7.5 in \citet{Grubb1}. Both papers propose that the expected backlog be``cumulated'' so that it may contribute to an aggregate inventory cost function which, when divided by the length of a chosen time horizon, yields an average cost per time unit.
	
	The issue of interest for this note is that neither \citet{Grubb2} nor \citet{Grubb1} provide details on how they arrive at the final result in Eq.~\ref{eq1} in the case of a \textit{finite} time horizon. Instead, both papers cite a common source to which they refer the reader for proof. Yet such source remains, to the best of my knowledge, unpublished.
	
	In the remainder of this note I address the lack of accessible details about how  Eq.~\ref{eq1} may be arrived at by attempting direct proof. I provide a sequence of explicit statements (lemmas) that can be demonstrated directly, and discuss possible improvements in terms of accuracy on the original work. Throughout this note I assume familiarity with key results about Laplace transforms. For the less familiar reader, \citet[][Ch.5]{farlow2012introduction} provides an accessible yet exhaustive introduction to the topic. The specialist reader interested in an overview of applications of the Laplace transform in the context of managing production-inventory systems might benefit from \citet{Grubbstrom.2007}.
	
	\section{Managerial relevance}
	The challenges of estimating expected shortfall has received renewed interest after the disruptive events linked to a recent pandemic. For example \citet{SodhiCOVID2} rely on newsvendor-type shortfall metrics to model trade-offs between strategic stockpiling and alternative policies at the national level. Unlike the normally-distributed ``textbook'' case \citep[e.g.][]{Nahmias.2005}, they assume that the number of people affected by virus-related disasters follow a negative-exponential probability density function.
	
	Shortfalls are of great practical relevance in the context of clinical trial supply chains. It is not uncommon to  overcompensate stock-out risk with high inventory levels to achieve service levels such that clinical trials are carried out effectively. Research on trading-off inventory overage and the risk of a clinical study experiencing shortage, often relies on the Poisson process to model key aspects of clinical trial supply, such as patient recruitment processes \citep{anisimov} and demand of investigational medicinal product in a distribution network \citep[e.g.][]{Fleischhacker}. Yet research rarely grounds these assumptions into extensive analysis of empirical data \citep{settanni2018}.

	\section{Verification strategy}
	\label{replicat}
	The chosen verification strategy is summarised in Fig.~\ref{stratFig}. Both sides of Eq.~\ref{eq1} are replicated, progressing form the transform of backlog, through that of \textit{expected} backlog, to the inverse transform of \textit{cumulative expected} backlog.
	
	For the sake of benchmarking, Table \ref{tab01} shows a comparison between this note's findings and two alternative expressions for the Laplace transform of cumulative expected backlog. Namely, (1) the one given in Eq.\ref{eq1}, reported by previous work, and (2) the result generated by querying the automated computing engine \emph{Wolfram Alpha}, which is freely accessible online \href{https://www.wolframalpha.com}{(www.wolframalpha.com)}. It is worth highlighting that, much like the former, the latter is not accompanied by detailed guidance on how the result is arrived at.

	\begin{sidewaysfigure}[t]
		\includegraphics[scale=0.45]{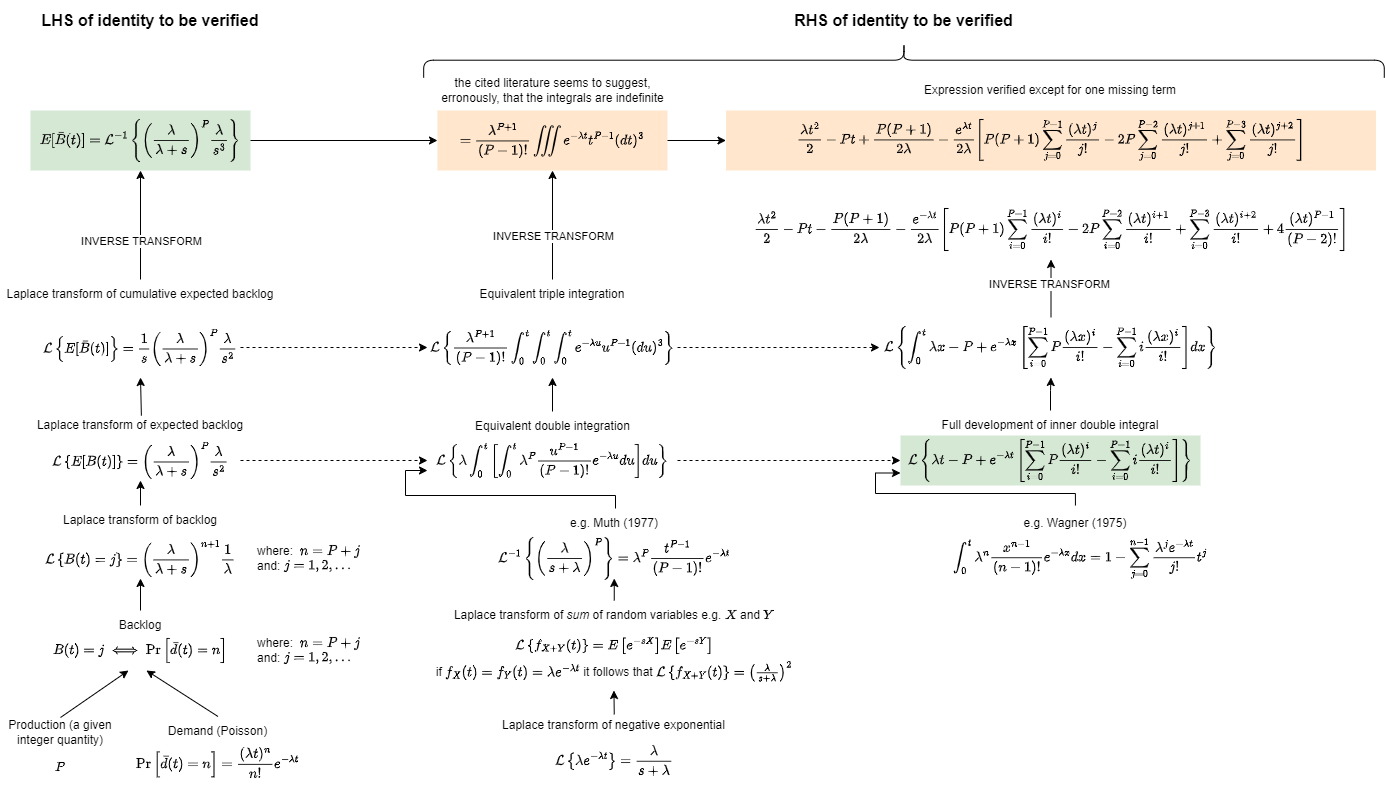}
		\caption{Replication strategy with main results}
		\label{stratFig}
	\end{sidewaysfigure}
	
	\begin{landscape}
	\bgroup
	\def\arraystretch{3}%
	\captionof{table}{Summary of comparative results for $\mathcal{L}^{-1}\left\{\left(\frac{\lambda}{\lambda+s}\right)^P\frac{\lambda}{s^3} \right\}$}
	\label{tab01}		
		\begin{minipage}{\textwidth} 
			\begin{center}
			  \begin{tabular}{|l|l|}	\toprule[1.5pt]	
				Original Equation\footnote{\citet{Grubb2, Grubb1}} & $\frac{\lambda t^{2}}{2}-Pt+\frac{P(P+1)}{2\lambda} - \frac{e^{\lambda t}}{2\lambda} \left[P(P+1) \sum_{j=0}^{P-1} \frac{(\lambda t)^j}{j!} -2P\sum_{j=0}^{P-2} \frac{(\lambda t)^{j+1}}{j!} +\sum_{j=0}^{P-3} \frac{(\lambda t)^{j+2}}{j!} \right]$ \\
				\hline
				Wolfram Alpha\footnote{ \href{https://www.wolframalpha.com}{www.wolframalpha.com}; Please note that this is what I obtain after developing the original result, which is provided in terms of \emph{Gamma} and \emph{incomplete Gamma} functions.} & $\frac{\lambda t^2}{2}-Pt-\frac{P(P+1)}{2\lambda}-\frac{e^{-\lambda t}}{2\lambda}\left[P(P+1)\sum_{i=0}^{P+1}\frac{(\lambda t)^i}{i!} -2P\sum_{i=0}^{P+1}\frac{(\lambda t)^{i+1}}{i!}+\sum_{i=0}^{P+1}\frac{(\lambda t)^{i+2}}{i!}+\frac{(P-1)(\lambda t)^{P+2}-(\lambda t)^{P+3}}{(P+1)!} \right]$   \\
				\hline
				This note & $\frac{\lambda t^2}{2}-Pt-\frac{P(P+1)}{2\lambda}-\frac{e^{-\lambda t}}{2\lambda}\left[P(P+1)\sum_{i=0}^{P-1}\frac{(\lambda t)^i}{i!} -2P\sum_{i=0}^{P-2}\frac{(\lambda t)^{i+1}}{i!}+\sum_{i=0}^{P-3}\frac{(\lambda t)^{i+2}}{i!}-4P\frac{(\lambda t)^{P-1}}{(P-2)!} \right]$ \\
				\hline
			  \end{tabular}			
		    \end{center}
		\end{minipage}
	
	\egroup
	\end{landscape}

	\subsection{Laplace transform of backlog}
	\label{transback}	
	By definition, a backlog $B(t)=j$ occurs if, in a given time interval $t$, cumulative demand exceeds cumulative production by $j$ units. In the case of interest here production $P$ is a given, and demand $\bar{d}(t)$ is stochastic and approximately Poisson-distributed with known parameter $\lambda t$. Then, a backlog $B(t)=j$ can be re-phrased as the probability of exactly $n=P + j$ demand events:
	\begin{align}
		\label{eq3}
			\left[B(t)=j \right] &\equiv \mathrm{Pr}\left[\bar{d}(t)=n \right] \nonumber\\
			& \equiv \frac{(\lambda t)^n}{n!}e^{-\lambda t}, \ n=0,1,2,\dots
	\end{align}
	
	My next	step is to relate the Laplace transform of a backlog to its equivalent formulation in terms of stochastic demand. 
	\begin{lem}
	\label{eq4}
	Let $B(t)=j$ be a non-negative backlog of magnitude $j\in\mathbb{N}$ occurring in a time interval $t$.  Let the demand over the same time interval be $\bar{d}(t) \sim \mathrm{Pois}(\lambda t)$, and production $P\in\mathbb{N}$ be a given. Then,
		\begin{equation}
			\mathcal{L}\left\{ B(t)=j \right\} = \left(\frac{\lambda}{\lambda+s} \right)^{{j+P}}\frac{1}{\lambda+s} \nonumber
		\end{equation}
	\end{lem}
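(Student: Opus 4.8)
The plan is to reduce the claim to a single entry in the Laplace-transform table. By Eq.~\ref{eq3}, for the purpose of transforming in $t$ the object $B(t)=j$ is simply the ordinary function
\[
g(t) \;=\; \mathrm{Pr}\!\left[\bar{d}(t)=n\right] \;=\; \frac{(\lambda t)^{n}}{n!}\,e^{-\lambda t}, \qquad n \;=\; j+P \in \mathbb{N},
\]
which, up to the constant $\lambda^{-1}$, is the gamma density of shape $n+1$ and rate $\lambda$. The first step is therefore to make this identification explicit, i.e.\ to record that $\mathcal{L}\{B(t)=j\}(s) = \mathcal{L}\{g\}(s)$.

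Second, I would compute $\mathcal{L}\{g\}$ straight from the definition:
\begin{align*}
\mathcal{L}\{g\}(s) &= \int_{0}^{\infty} \frac{(\lambda t)^{n}}{n!}\,e^{-\lambda t}\,e^{-st}\,dt = \frac{\lambda^{n}}{n!}\int_{0}^{\infty} t^{n}\,e^{-(\lambda+s)t}\,dt \\
&= \frac{\lambda^{n}}{n!}\cdot\frac{n!}{(\lambda+s)^{\,n+1}} = \frac{\lambda^{n}}{(\lambda+s)^{\,n+1}},
\end{align*}
where pulling the constant outside and evaluating the gamma integral $\int_{0}^{\infty} t^{n}e^{-bt}\,dt = n!/b^{\,n+1}$ is legitimate on the half-plane $\mathrm{Re}(s) > -\lambda$; equivalently, one may quote the table pair $\mathcal{L}\{t^{n}\}(s) = n!/s^{\,n+1}$ together with the frequency-shift (damping) rule $\mathcal{L}\{e^{-\lambda t}h(t)\}(s) = \mathcal{L}\{h\}(s+\lambda)$. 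Substituting $n = j+P$ and regrouping the powers then gives $\left(\frac{\lambda}{\lambda+s}\right)^{j+P}\frac{1}{\lambda+s}$, which is the asserted identity.

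I do not expect a genuine obstacle: the mathematical content is only that the Poisson mass evaluated at $\lambda t$, read as a function of $t$, has an elementary rational transform. The two points deserving care are (i) stating precisely that $\mathcal{L}\{B(t)=j\}$ denotes the transform of the $t$-dependent probability in Eq.~\ref{eq3} and not of a random quantity, and (ii) recording the region of convergence $\mathrm{Re}(s) > -\lambda$ so that the interchange of constant and integral, and the evaluation of the gamma integral, need no further comment. As a consistency check, summing the claimed transform over $j \ge 0$ collapses by a geometric series to $\frac{1}{s}\left(\frac{\lambda}{\lambda+s}\right)^{P}$ --- the transform of the stock-out probability $\mathrm{Pr}[\bar{d}(t)\ge P]$ --- whereas the $j$-weighted sum $\sum_{j\ge 1} j\,\mathcal{L}\{B(t)=j\}$, once multiplied by $1/s$, reproduces the factor $\left(\frac{\lambda}{\lambda+s}\right)^{P}\frac{\lambda}{s^{3}}$ appearing on the left-hand side of Eq.~\ref{eq1}; both serve as sanity checks for the lemmas to come.
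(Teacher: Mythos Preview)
Your proposal is correct and follows essentially the same route as the paper: identify $B(t)=j$ with the Poisson mass $g(t)=\frac{(\lambda t)^{n}}{n!}e^{-\lambda t}$, write the transform as $\frac{\lambda^{n}}{n!}\int_{0}^{\infty}t^{n}e^{-(s+\lambda)t}\,dt$, evaluate via the gamma integral $\int_{0}^{\infty}t^{n}e^{-bt}\,dt=n!/b^{\,n+1}$, and regroup with $n=j+P$. Your version is in fact tidier than the paper's (you factor $\lambda^{n}$ rather than the typo-laden $(\lambda t)^{n}$, and you add the region of convergence and the frequency-shift alternative), but the underlying argument is the same.
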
	
	\begin{proof} Taking the Laplace transform of both terms in Eq.~\ref{eq3} yields:
		\begin{align*}			
				\mathcal{L}\left\{ B(t)=j \right\} &\equiv \mathcal{L}\left\{\frac{(\lambda t)^{n}}{n!}e^{-\lambda t}  \right\}\\
				&=\int_{0}^{\infty}  e^{-st}\frac{(\lambda t)^{P + j}e^{-\lambda t}}{(n)!} dt \\
				&=\frac{(\lambda t)^{n}}{n!}\int_{0}^{\infty} t^{n}e^{-t(s+\lambda)}dt \\
				&=\frac{(\lambda t)^{n}}{n!}\left[ \frac{n!}{(\lambda+s)^n} \frac{1}{(\lambda+s)} \right] \\
				&=\left(\frac{\lambda}{s+\lambda} \right)^{n+1}\frac{1}{\lambda} \\
				&=\left(\frac{\lambda}{s+\lambda} \right)^{n}\frac{1}{s+\lambda} \nonumber
		\end{align*}
	For $n=j+P$ one obtains the identity in the Lemma
	\end{proof}
	
	\begin{remark}
			In the above I exploit insights from the standard Laplace pair $\mathcal{L}\left\{t^n\right\}=\frac{n!}{s^{n+1}}$ \citep[see e.g.][p.252]{farlow2012introduction}. By definition of Laplace transform, $\mathcal{L}\left\{t^n\right\}=\int_{0}^{\infty}t^ne^{-st}dt$. Integrating by parts yields $\int_{0}^{\infty}t^ne^{-st}dt=\frac{n}{s}\mathcal{L}\left\{t^{n-1}\right\}$. By recursion one obtains	a key result deployed in Lemma~\ref{eq4}		
			\begin{align}
				\int_{0}^{\infty}t^ne^{-st}dt &=\frac{n}{s} \frac{n-1}{s} \frac{n-2}{s} \dots \frac{2}{s} \frac{1}{s} \nonumber \\
				&= \frac{n!}{s^n} \frac{1}{s} \nonumber
			\end{align}
			a result which is linked with the concept of $Gamma$ function \citep[][p.94]{muth1977transform}.
	\end{remark}

	\subsection{Role of the negative exponential density function}
	Although without detail about its derivation, the Laplace transform pair for a known stockout level in Lemma~\ref{eq4} features in \citet[][Eq.2.11]{Grubb2} and in \citet[][Eqs.~5.4 and 3.10]{Grubb1}. In these papers the pair is often expressed in terms of another Laplace transform pair, which involves the probability density function of interarrival times.
	
	The Poisson and negative exponential distributions are related: stating that the probability of $n$ arrivals in any time interval of length $t$ is $\sim \mathrm{Pois}(\lambda t)$ is equivalent to saying that the density function for the time interval $t$ between two consecutive arrivals is exponentially distributed with arrival rate per unit time $\lambda$  \citep[for a proof see e.g., ][pp.860-861]{wagner1975principles}. 
	
	To rephrase Lemma~\ref{eq4} the next step is to work out the Laplace transform pair for the negative exponential density function. This well-known pair is often presented as self-evident -- see e.g., \citet[][p.191]{muth1977transform}, \citet[][p.302]{Grubbstrom.2007} \citet[][p.410]{Grubb1}, and \citet[][p.301]{Grubb2}:
	
	\begin{lem}
		\label{eq5}
		 Let $f(t) = \lambda e^{-\lambda t}$ and $\mathcal{L}\left\{f(t) \right\} = \tilde{f}(s)$. Then,
		 \begin{equation*}
		 	 \tilde{f}(s) = \frac{\lambda}{s + \lambda} 
		 \end{equation*}		 
	\end{lem}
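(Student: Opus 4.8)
The plan is to compute the defining integral directly. By definition, $\tilde f(s)=\mathcal{L}\{\lambda e^{-\lambda t}\}=\int_{0}^{\infty}e^{-st}\,\lambda e^{-\lambda t}\,dt$. First I would pull the constant $\lambda$ outside the integral and merge the two exponentials into a single factor $e^{-(s+\lambda)t}$, reducing the task to evaluating $\lambda\int_{0}^{\infty}e^{-(s+\lambda)t}\,dt$. This is an elementary improper integral of a decaying exponential: it converges precisely when $\operatorname{Re}(s+\lambda)>0$ (for real $s$ this is $s>-\lambda$, which, since $\lambda>0$, holds in particular for all $s\ge 0$), and computing the antiderivative and taking the limit at infinity gives $\lambda\cdot\frac{1}{s+\lambda}$, i.e.\ the claimed $\tilde f(s)=\frac{\lambda}{s+\lambda}$.

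Two shortcuts are worth noting. The result is simply the $n=0$ specialisation of the computation already carried out in the proof of Lemma~\ref{eq4} and its accompanying remark, where it was shown that $\mathcal{L}\{(\lambda t)^{n}e^{-\lambda t}/n!\}=\left(\tfrac{\lambda}{s+\lambda}\right)^{n}\tfrac{1}{s+\lambda}$; setting $n=0$ gives $\mathcal{L}\{e^{-\lambda t}\}=\tfrac{1}{s+\lambda}$, and multiplying through by the constant $\lambda$ yields the Lemma. Alternatively one can invoke the frequency-shift property $\mathcal{L}\{e^{-at}g(t)\}(s)=\tilde g(s+a)$ with $g\equiv\lambda$ together with the standard pair $\mathcal{L}\{1\}=1/s$.

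Since the verification is essentially a one-line integral, there is no substantive obstacle; the only point that merits explicit mention is the region of convergence, namely the restriction $s>-\lambda$ under which the improper integral is finite and the transform pair is well defined. I would record that condition and then conclude.
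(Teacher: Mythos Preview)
Your proposal is correct and follows essentially the same route as the paper: the paper also computes the defining integral directly, factoring out $\lambda$, combining the exponentials into $e^{-(s+\lambda)t}$, and evaluating the antiderivative between $0$ and $\infty$. Your additions---the explicit region of convergence and the two shortcuts via Lemma~\ref{eq4} at $n=0$ or the frequency-shift property---are sound extras but not part of the paper's own argument.
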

	\begin{proof} Taking the Laplace transform of both sides of $f(t) = \lambda e^{-\lambda t}$ one obtains:
		\begin{align*}
			\mathcal{L}\left\{f(t) \right\} &= \mathcal{L}\left\{ \lambda e^{-\lambda t} \right\} \\
			&= \lambda \int_{0}^{\infty}e^{-t(s+\lambda)} dt\\
			&=\lambda \frac{-e^{-t(s+\lambda)}}{s+\lambda} \Big|_0^\infty \\                        
			&= \frac{\lambda}{s + \lambda} \\
			&= \tilde{f}(s)		\nonumber
		\end{align*}
		which completes the proof
	\end{proof}
	\begin{remark}
		\label{secondexp}
		Based on the above one deduces, after some algebra, that: 
		\begin{equation}			
			1-\mathcal{L}\left\{ \lambda e^{-\lambda t} \right\} = \frac{s}{s+\lambda}  \nonumber
		\end{equation}
	\end{remark}
	
	\begin{corollary}
		\label{eq6}
		The Laplace transform of backlog obtained in Lemma~\ref{eq4} can be re-written combining Lemma~\ref{eq5} and Remark~\ref{secondexp} as follows:
		\begin{align*}
				\mathcal{L}\left\{ \mathrm{Pr}\left[B(t) = j \right] \right\} &= \mathcal{L}\left\{ \lambda e^{-\lambda t} \right\}^{n} \frac{1}{s}\left( 1  - \mathcal{L}\left\{ \lambda e^{-\lambda t} \right\} \right) \\
				&= \tilde{f}(s)^n \frac{1}{s}\left[1  - \tilde{f}(s) \right]		\nonumber
		\end{align*}
	\end{corollary}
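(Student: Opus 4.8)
The plan is to obtain the claimed factorisation purely by algebraic rearrangement of the closed form already in hand, so that no new integration is required. First I would recall from Lemma~\ref{eq4} that, writing $n=j+P$,
\[
\mathcal{L}\left\{\mathrm{Pr}\left[B(t)=j\right]\right\}=\left(\frac{\lambda}{\lambda+s}\right)^{n}\frac{1}{\lambda+s},
\]
so that the transform is a product of $n$ identical factors $\lambda/(\lambda+s)$ together with one extra factor $1/(\lambda+s)$ (equivalently, $\lambda^{n}$ over $(\lambda+s)^{n+1}$). The first move is then to identify the repeated factor: by Lemma~\ref{eq5} we have $\tilde{f}(s)=\lambda/(s+\lambda)$, hence $\left(\lambda/(\lambda+s)\right)^{n}=\tilde{f}(s)^{n}$.

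Next I would deal with the leftover factor $1/(\lambda+s)$. The idea is to split off an explicit $1/s$ by inserting $s/s$, i.e. $\frac{1}{\lambda+s}=\frac{1}{s}\cdot\frac{s}{\lambda+s}$, and then to invoke Remark~\ref{secondexp}, which states precisely that $\frac{s}{s+\lambda}=1-\tilde{f}(s)$. Substituting and collecting the pieces gives
\[
\mathcal{L}\left\{\mathrm{Pr}\left[B(t)=j\right]\right\}=\tilde{f}(s)^{n}\,\frac{1}{s}\left(1-\tilde{f}(s)\right),
\]
which is the asserted identity.

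I expect the verification itself to be routine; the only thing to watch is the exponent bookkeeping. One must keep $n=j+P$ throughout and resist the temptation to absorb the extra $1/(\lambda+s)$ into the power, since the transform carries total degree $n+1$ in the denominator: $n$ copies become $\tilde{f}(s)^{n}$ and the remaining copy supplies, after the $s/s$ trick, the factor $1-\tilde{f}(s)$. It is also worth stating explicitly that the isolated $1/s$ is what will later act as the transform-domain counterpart of cumulating (integrating) the expected backlog, so that this Corollary is the natural bridge from Lemma~\ref{eq4} to the expressions for cumulative expected backlog that the note is ultimately after.
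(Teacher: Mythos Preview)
Your argument is correct and matches the paper's approach: the paper does not spell out a proof for this Corollary but simply states that it follows by combining Lemma~\ref{eq5} and Remark~\ref{secondexp}, and your steps---identifying $(\lambda/(\lambda+s))^{n}=\tilde f(s)^{n}$, writing $1/(\lambda+s)=\tfrac{1}{s}\cdot\tfrac{s}{\lambda+s}$, and invoking $s/(s+\lambda)=1-\tilde f(s)$---are exactly that combination made explicit. The only caveat is your closing interpretive remark: the isolated $1/s$ here is an artefact of rewriting $1/(\lambda+s)$ for a \emph{single} backlog level, not the cumulation factor; the extra $1/s$ that effects integration appears only later, in Eq.~\ref{eq9}, after passing to the expected backlog in Lemma~\ref{eq7}.
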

	
	One will notice that in Lemma~\ref{eq4} and, equivalently, in Corollary~\ref{eq6} the Laplace transform of the negative exponential density function is raised to the $n$--th power. \citet[][p.196-197]{muth1977transform} shows that this exponentiation is equivalent to transforming the expected value of the sum of $n$ random variables with the same exponential distribution.

	\begin{proposition}[\citealt{muth1977transform}]
		\label{propExp}
		Let $X_1, X_2, \dots, X_n$ be continuous, independent random variables having the same probability density function $f_X(t)=\lambda e^{-\lambda t}$. Let a new random variable $Y$ be the sum $Y=X_1 + X_2 + \dots + X_n$. It follows that its Laplace transform is:
		\begin{align*}
			\mathcal{L}\left\{f_Y(t) \right\} &= E\left[  e^{-sX_1}\right] E\left[e^{-sX_2}\right]\dots E\left[e^{-sX_n}\right] \\
			&= \left( \frac{\lambda}{s + \lambda} \right)^n	
		\end{align*}
	\end{proposition}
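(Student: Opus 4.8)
The plan is to prove Proposition~\ref{propExp} in two moves: first recognise that the Laplace transform of a probability density is nothing but an expectation, and then use independence to turn the transform of a sum into a product of transforms. To begin, I would observe that for any non-negative random variable $X$ with density $f_X$ the definition of the transform gives $\mathcal{L}\{f_X(t)\}(s) = \int_0^\infty e^{-st} f_X(t)\,dt = E[e^{-sX}]$ --- the ``moment generating function evaluated at $-s$'' --- which requires no more than the definition of the expectation of a function of a continuous random variable. Applying this to $Y = X_1 + \dots + X_n$ yields $\mathcal{L}\{f_Y(t)\}(s) = E[e^{-sY}] = E\!\left[\prod_{i=1}^n e^{-sX_i}\right]$.

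The key step is then to factor the expectation of the product into the product of the expectations, which is legitimate precisely because the $X_i$ are independent: for independent random variables the expectation of a product of (measurable) functions of them factorises, so $E\!\left[\prod_{i=1}^n e^{-sX_i}\right] = \prod_{i=1}^n E[e^{-sX_i}]$. Since the $X_i$ are moreover identically distributed with common density $f_X(t) = \lambda e^{-\lambda t}$, every factor is the same, and Lemma~\ref{eq5} identifies it as $E[e^{-sX_i}] = \mathcal{L}\{\lambda e^{-\lambda t}\}(s) = \lambda/(s+\lambda)$. Multiplying the $n$ identical factors gives $\left(\lambda/(s+\lambda)\right)^n$, which is the claimed identity.

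An alternative route, avoiding the multiplicativity of expectation, is induction on $n$ via the convolution theorem for Laplace transforms: the density of a sum of two independent random variables is the convolution of their densities, so $f_Y = f_{X_1} * \cdots * f_{X_n}$ and $\mathcal{L}\{f*g\} = \mathcal{L}\{f\}\,\mathcal{L}\{g\}$; the base case $n = 1$ is Lemma~\ref{eq5} and the inductive step multiplies in one more factor $\lambda/(s+\lambda)$. Either way, the only genuine obstacle is one of rigour rather than computation --- justifying the factorisation (multiplicativity of expectation under independence, or equivalently the convolution property of the transform), together with the mild convergence conditions that make the interchange of expectation and product legitimate; here these are automatic, since for $\mathrm{Re}(s) > -\lambda$ all the integrals involved converge absolutely. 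Once that is granted, the result is immediate from Lemma~\ref{eq5}.
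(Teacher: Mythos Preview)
Your argument is correct and follows essentially the same route as the paper: the identification $\mathcal{L}\{f_X\}(s)=E[e^{-sX}]$, factorisation of the expectation via independence, and evaluation of each factor by Lemma~\ref{eq5}. The only difference is scope---the paper merely illustrates the case $n=2$ in Appendix~\ref{AppendixConvol} (attributing the general statement to \citet{muth1977transform}), whereas you carry the same steps through for arbitrary $n$ and additionally sketch the equivalent convolution/induction variant.
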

	
	The above proposition is illustrated in Appendix \ref{AppendixConvol} for the case $n=2$.

	\subsection{Laplace transform of \textit{expected} backlog}
	\label{expback}
	In this section I move on to the \textit{expected} backlog. The notion of expected value of a random variable \citep{wackerly2014mathematical} is key for demonstrating the following Laplace transform pair:
	\begin{lem}
		\label{eq7}
		Let $E[B(t)]$ be the \textit{expected} backlog occurring in a time interval $t$. As before, let the demand over the same time interval be $\bar{d}(t) \sim \mathrm{Pois}(\lambda t)$, and let production $P\in\mathbb{N}$ be a given. Then,
		\begin{equation}
			\mathcal{L}\left\{ E \left[B(t)\right] \right\} =\left(\frac{\lambda}{\lambda+s} \right)^{P}\frac{\lambda}{s^{2}} 
			\nonumber
		\end{equation}
	\end{lem}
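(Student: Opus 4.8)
The plan is to start from the definition of expected value applied to the backlog. By Eq.~\ref{eq3} the event $[B(t) = j]$ carries probability $\mathrm{Pr}[\bar{d}(t) = P+j] = \frac{(\lambda t)^{P+j}}{(P+j)!}e^{-\lambda t}$, so the expected backlog is the series $E[B(t)] = \sum_{j=0}^{\infty} j\,\mathrm{Pr}[B(t) = j]$ (the $j=0$ term vanishes, so this also equals $\sum_{j=1}^{\infty} j\,\mathrm{Pr}[B(t)=j]$). I would then apply the Laplace transform to this series and, invoking linearity together with a convergence argument, exchange transform and summation so that $\mathcal{L}\{E[B(t)]\} = \sum_{j=0}^{\infty} j\,\mathcal{L}\{B(t) = j\}$.

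Next I would substitute the transform of a known backlog level from Lemma~\ref{eq4}, namely $\mathcal{L}\{B(t) = j\} = \left(\frac{\lambda}{\lambda+s}\right)^{j+P}\frac{1}{\lambda+s}$, and factor out the part independent of $j$: writing $r$ for $\frac{\lambda}{\lambda+s}$ gives $\mathcal{L}\{E[B(t)]\} = \frac{r^{P}}{\lambda+s}\sum_{j=0}^{\infty} j\,r^{j}$. The remaining sum is the standard power series $\sum_{j=0}^{\infty} j\,r^{j} = \frac{r}{(1-r)^{2}}$, valid for $|r|<1$, which holds throughout the half-plane of convergence $\mathrm{Re}(s)>0$.

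Finally I would simplify using $1-r = \frac{s}{\lambda+s}$, so that $\frac{r}{(1-r)^{2}} = \frac{\lambda(\lambda+s)}{s^{2}}$, and collect terms: $\mathcal{L}\{E[B(t)]\} = \frac{r^{P}}{\lambda+s}\cdot\frac{\lambda(\lambda+s)}{s^{2}} = \left(\frac{\lambda}{\lambda+s}\right)^{P}\frac{\lambda}{s^{2}}$, which is the asserted pair. The step I expect to require the most care is the interchange of the Laplace integral with the infinite sum: one needs either uniform convergence of the series of transforms on the relevant half-plane, or a dominated-convergence-type bound -- for instance, noting that $\sum_{j} j\,\mathrm{Pr}[B(t)=j] = E[B(t)] \leq E[\bar{d}(t)] = \lambda t$ is finite and of exponential order, so the termwise transforms are controlled. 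An alternative that sidesteps this is to sum the geometric-type series in $j$ in closed form \emph{before} transforming -- i.e., work from Corollary~\ref{eq6} -- and only then take the transform; but the termwise route above is the most transparent.
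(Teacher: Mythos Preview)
Your argument is correct and follows the same overall architecture as the paper's proof: form the series $\sum_{j\ge 0} j\,\mathcal{L}\{B(t)=j\}$, substitute Lemma~\ref{eq4}, and pull out the factor $\left(\frac{\lambda}{\lambda+s}\right)^{P}$. The one point of departure is how the residual sum is dispatched. You treat $\sum_{j\ge 0} j\,r^{j}$ as a bare power series and close it as $r/(1-r)^{2}$, then simplify algebraically; the paper instead reinterprets the residual sum probabilistically as $\sum_{j\ge 0} j\,\mathcal{L}\{\mathrm{Pr}[\bar d(t)=j]\}=\mathcal{L}\{E[\bar d(t)]\}=\mathcal{L}\{\lambda t\}=\lambda/s^{2}$. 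Your route is slightly more elementary and self-contained (no appeal to the Poisson mean), while the paper's route makes the link to $E[\bar d(t)]=\lambda t$ explicit, which is useful narrative given the surrounding discussion. Your attention to justifying the interchange of sum and Laplace integral is a welcome addition that the paper leaves implicit.
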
	 
	 \begin{proof}	 	
	 	Following the hint in \citet[][Eq.2.12]{Grubb2} and in \citet[][Eq.~5.5]{Grubb1}, let $j=0,1,2,\dots$ be a random variable whose probability density has the Laplace transform given in Lemma~\ref{eq4}. Then,
	 	\begin{align*}	 		
	 		\mathcal{L}\left\{ E \left[B(t)\right] \right\} &= \sum_{j=0}^{\infty}j\left[\left(\frac{\lambda}{\lambda+s} \right)^{P+j}\frac{1}{\lambda+s}\right] \nonumber \\
	 		&=\left(\frac{\lambda}{\lambda+s} \right)^{P}\sum_{j=0}^{\infty}j \ \mathcal{L}\left\{\mathrm{Pr}\left[\bar{d}(t)= j \right] \right\} \nonumber \nonumber\\
	 		&=\left(\frac{\lambda}{\lambda+s} \right)^{P}\mathcal{L}\left\{E\left[\bar{d}(t)\right]\right\} \nonumber\\
	 		&=\left(\frac{\lambda}{\lambda+s} \right)^{P}\mathcal{L}\left\{\lambda t \right\} \nonumber\\
	 		&=\left(\frac{\lambda}{\lambda+s} \right)^{P} \lambda  \int_0^\infty \frac{e^{-st}}{s} dt \nonumber\\
	 		&=\left(\frac{\lambda}{\lambda+s} \right)^{P}\frac{\lambda}{s^{2}}	\nonumber		
	 	\end{align*}
	 which completes the proof
	 \end{proof}
	
	\begin{remark}
	In Lemma~\ref{eq7} I adapt a standard result for the expected value of a Poisson-distributed random variable  \citep[e.g.][pp.134-135]{wackerly2014mathematical} to the specific case of the total arrival of demand events  $\bar{d}(t)$:
		\begin{equation}
			\begin{split}
				E[\bar{d(t)}] &=E \left[\frac{(\lambda t)^{n}e^{-\lambda t}}{n!}\right] \\
				&=\lambda t  \nonumber
			\end{split}		
		\end{equation}
	which is analogous to \citet[][Eq.3.11]{Grubb1} and \citet[Eq.7]{Grubb1998}.
	\end{remark}
	
	\begin{corollary}
		\label{corolEB}
		The result in Lemma~\ref{eq7} can be rewritten using notation introduced in Lemma~\ref{eq5} and Remark~\ref{secondexp} as follows:
		\begin{align*}
			\mathcal{L}\left\{ E \left[B(t)\right] \right\} &=\left(\frac{\lambda}{\lambda+s}\right)^{P}\frac{\lambda}{s^{2}} \\
			&=\left(\cfrac{\lambda}{\lambda + s}\right)^{P+1} \left( s\cfrac{s}{\lambda + s}\right)^{-1} \\
			&=\frac{\tilde{f}(s)^{P+1}}{s(1-\tilde{f}(s))}			
		\end{align*}
	\end{corollary}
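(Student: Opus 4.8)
The plan is to treat this corollary as a purely algebraic restatement of Lemma~\ref{eq7}: no new analysis is required. I would start from the closed form $\mathcal{L}\{E[B(t)]\} = \left(\frac{\lambda}{\lambda+s}\right)^{P}\frac{\lambda}{s^{2}}$ already established there and rewrite the right-hand side in the $\tilde{f}(s)$ notation of Lemma~\ref{eq5} and Remark~\ref{secondexp}.

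First I would peel off one extra factor of $\frac{\lambda}{\lambda+s}$ from the rational term, writing $\frac{\lambda}{s^{2}} = \frac{\lambda}{\lambda+s}\cdot\frac{\lambda+s}{s^{2}}$, so that the prefactor becomes $\left(\frac{\lambda}{\lambda+s}\right)^{P+1}$. Then I would verify the elementary identity $\frac{\lambda+s}{s^{2}} = \left(s\cdot\frac{s}{\lambda+s}\right)^{-1}$, valid for $s\neq 0$ and $s\neq-\lambda$; this is the one place where the parenthesisation needs care, since the middle expression in the statement is the reciprocal of a product rather than a product of reciprocals.

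Finally I would substitute the two named quantities: $\frac{\lambda}{\lambda+s} = \tilde{f}(s)$ by Lemma~\ref{eq5}, and $\frac{s}{\lambda+s} = 1-\tilde{f}(s)$ by Remark~\ref{secondexp}. This turns $\left(s\cdot\frac{s}{\lambda+s}\right)^{-1}$ into $\frac{1}{s\,(1-\tilde{f}(s))}$ and the prefactor into $\tilde{f}(s)^{P+1}$, giving the chain of equalities in the corollary.

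I do not anticipate a genuine obstacle. The only points to watch are (i) keeping the manipulations reversible on the region of convergence, i.e.\ excluding $s=0$ and $s=-\lambda$ where the intermediate forms are undefined, and (ii) confirming that $1-\tilde{f}(s)\neq 0$ (equivalently $s\neq 0$) so that the final quotient is well defined. Both hold wherever the Laplace transform in Lemma~\ref{eq7} itself converges, so the corollary follows at once.
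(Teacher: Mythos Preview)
Your proposal is correct and mirrors exactly what the paper does: the corollary is presented as a self-evident algebraic rewriting, with the three displayed equalities themselves constituting the argument, and your steps (pulling out one extra factor $\frac{\lambda}{\lambda+s}$, recognising $\frac{\lambda+s}{s^{2}}=\bigl(s\cdot\frac{s}{\lambda+s}\bigr)^{-1}$, then substituting $\tilde f(s)$ and $1-\tilde f(s)$) are precisely that chain. No separate proof is given in the paper, so your level of detail already exceeds it.
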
	
	
	The notation introduced in Corollary~\ref{corolEB}  is frequently adopted in the literature when referring to the Laplace transform pair in Lemma~\ref{eq7} \citep[see e.g.,][]{Grubb2, Grubb1, Grubb1998, Grubb.1999}.
	
	The next step is to show the equivalence between the Laplace transform pair in Lemma.~\ref{eq7} and the inner double integral on the right-hand side of in Eq.~\ref{eq1}: 
	
	\begin{lem}
		\label{eq2}
		Let the Laplace transform of expected backlog be as defined in Lemma~\ref{eq7}. The corresponding time-domain expression for $E \left[B(t)\right]$ is: 
		\begin{align*}
			\mathcal{L}^{-1}\left\{\left(\frac{\lambda}{\lambda+s} \right)^{P}\frac{\lambda}{s^{2}}\right\} &= \lambda t - P + e^{-\lambda t}\left[\sum_{i=0}^{P-1}P\frac{(\lambda t)^i}{i!}-\sum_{i=0}^{P-1}i\frac{(\lambda t)^i}{i!}  \right] 			
		\end{align*}
	\end{lem}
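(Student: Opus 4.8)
The plan is to obtain the closed form by inverting the rational function $\left(\frac{\lambda}{\lambda+s}\right)^{P}\frac{\lambda}{s^{2}}=\dfrac{\lambda^{P+1}}{s^{2}(s+\lambda)^{P}}$ through partial fractions and then applying $\mathcal{L}^{-1}$ term by term, exploiting its linearity. Since the denominator splits into the coprime factors $s^{2}$ and $(s+\lambda)^{P}$, there is a unique decomposition
\[
\frac{\lambda^{P+1}}{s^{2}(s+\lambda)^{P}}=\frac{A_{0}}{s}+\frac{A_{1}}{s^{2}}+\sum_{k=1}^{P}\frac{C_{k}}{(s+\lambda)^{k}}.
\]
The coefficients at the double pole $s=0$ are read off at once: multiplying by $s^{2}$ and letting $s\to0$ gives $A_{1}=\lambda$, while differentiating $\lambda^{P+1}(s+\lambda)^{-P}$ once and letting $s\to0$ gives $A_{0}=-P$. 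Together with the elementary pairs $\mathcal{L}^{-1}\{1/s\}=1$ and $\mathcal{L}^{-1}\{1/s^{2}\}=t$ (both consequences of $\mathcal{L}\{t^{n}\}=n!/s^{n+1}$, recalled in the remark after Lemma~\ref{eq4}), these two terms already supply the $\lambda t-P$ part of the asserted expression.

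The substantive computation is the coefficient at the pole of order $P$ at $s=-\lambda$. I would use the standard formula for partial-fraction coefficients at a repeated pole, $C_{k}=\frac{1}{(P-k)!}\left.\frac{d^{\,P-k}}{ds^{\,P-k}}\!\left(\frac{\lambda^{P+1}}{s^{2}}\right)\right|_{s=-\lambda}$, together with $\frac{d^{m}}{ds^{m}}s^{-2}=(-1)^{m}(m+1)!\,s^{-(m+2)}$, and then simplify the powers of $(-1)$ and $\lambda$ produced by evaluating at $s=-\lambda$; the algebra collapses to the compact expression $C_{k}=(P-k+1)\lambda^{k-1}$ for $k=1,\dots,P$. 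Inverting each term with $\mathcal{L}^{-1}\{(s+\lambda)^{-k}\}=e^{-\lambda t}\,t^{k-1}/(k-1)!$ (the shift rule applied to $\mathcal{L}^{-1}\{s^{-k}\}=t^{k-1}/(k-1)!$), summing, and relabelling $i=k-1$ turns $\sum_{k=1}^{P}C_{k}\frac{t^{k-1}}{(k-1)!}e^{-\lambda t}$ into $e^{-\lambda t}\sum_{i=0}^{P-1}(P-i)\frac{(\lambda t)^{i}}{i!}$, which is precisely $e^{-\lambda t}\bigl[\sum_{i=0}^{P-1}P\tfrac{(\lambda t)^{i}}{i!}-\sum_{i=0}^{P-1}i\tfrac{(\lambda t)^{i}}{i!}\bigr]$; adding the $s=0$ contribution closes the identity. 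I expect the only real obstacle to be bookkeeping: pinning down the $m$-th derivative of $s^{-2}$ and its sign at $s=-\lambda$, and carrying out the index shift cleanly so that the coefficient $(P-i)$ emerges in the stated form. (The same result can be read through the ``double integral'' lens of Eq.~\ref{eq1}: multiplying a transform by $1/s$ integrates in time, so the inverse equals $\lambda$ times the twofold time integral of $\mathcal{L}^{-1}\{(\lambda/(\lambda+s))^{P}\}=\tfrac{\lambda^{P}t^{P-1}}{(P-1)!}e^{-\lambda t}$; evaluating that integral by repeated integration by parts yields the same finite sums, though less directly than partial fractions.)

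Finally, as an independent check I would verify the formula probabilistically, bypassing transforms altogether. By Eq.~\ref{eq3} one has $\Pr[B(t)=j]=\Pr[\bar d(t)=P+j]$, so $E[B(t)]=\sum_{j\ge0}j\,e^{-\lambda t}\frac{(\lambda t)^{P+j}}{(P+j)!}=\sum_{n\ge P}(n-P)\,e^{-\lambda t}\frac{(\lambda t)^{n}}{n!}=E[(\bar d(t)-P)^{+}]$. Using the elementary identity $(\bar d(t)-P)^{+}=(\bar d(t)-P)+(P-\bar d(t))^{+}$ and the Poisson mean $E[\bar d(t)]=\lambda t$, this equals $\lambda t-P+E[(P-\bar d(t))^{+}]$, and the last term is the finite sum $\sum_{n=0}^{P-1}(P-n)\,e^{-\lambda t}\frac{(\lambda t)^{n}}{n!}=e^{-\lambda t}\bigl[P\sum_{i=0}^{P-1}\tfrac{(\lambda t)^{i}}{i!}-\sum_{i=0}^{P-1}i\tfrac{(\lambda t)^{i}}{i!}\bigr]$. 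Since Lemma~\ref{eq7} already identifies the Laplace transform of this closed form as $\left(\frac{\lambda}{\lambda+s}\right)^{P}\frac{\lambda}{s^{2}}$, uniqueness of the Laplace transform confirms that the closed form is indeed the claimed inverse.
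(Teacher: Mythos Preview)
Your proof is correct and takes a genuinely different route from the paper's. The paper interprets the factor $\lambda/s^{2}$ as $\lambda$ times a twofold time integration of $\mathcal{L}^{-1}\bigl\{(\lambda/(\lambda+s))^{P}\bigr\}=\frac{\lambda^{P}t^{P-1}}{(P-1)!}e^{-\lambda t}$ (precisely the alternative you relegate to a parenthetical remark), evaluates the inner integral via the Erlang CDF identity $\int_{0}^{t}\lambda^{n}\frac{x^{n-1}}{(n-1)!}e^{-\lambda x}\,dx=1-\sum_{j=0}^{n-1}\frac{(\lambda t)^{j}}{j!}e^{-\lambda t}$, and then collapses the resulting double sum $\sum_{i=0}^{P-1}\sum_{j=0}^{i}\frac{(\lambda t)^{j}}{j!}$ through the combinatorial identity $\sum_{i=0}^{n-1}\sum_{j=0}^{i}f(j)=\sum_{i=0}^{n-1}(n-i)f(i)$, which the author derives separately in an appendix. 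Your partial-fraction route is cleaner here: the coefficient $(P-i)$ emerges directly from the residue computation $C_{k}=(P-k+1)\lambda^{k-1}$, so no double-sum lemma is needed at all. The paper's integration approach, on the other hand, feeds naturally into the next lemma on \emph{cumulative} expected backlog, where a third time integration is stacked on and a partial-fraction expansion of $\lambda^{P+1}/\bigl(s^{3}(s+\lambda)^{P}\bigr)$ would be less pleasant. Your third, probabilistic verification via $(\bar d-P)^{+}=(\bar d-P)+(P-\bar d)^{+}$ does not appear in the paper and is arguably the most elementary of the three arguments, requiring no transform machinery whatsoever.
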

	\begin{proof}
		Recall that \citep[][p.198]{muth1977transform}:
		\begin{equation}
			\label{aux1}
			\mathcal{L}^{-1}\left\{ \left(\frac{\lambda}{s+\lambda}\right)^n  \right\}=\lambda^n\frac{t^{n-1}}{(n-1)!}e^{-\lambda t}
		\end{equation}
		and that \citep[][p.859]{wagner1975principles}:
		\begin{equation}
			\label{aux2}
			\int_{0}^{t}\lambda^n\frac{x^{n-1}}{(n-1)!}e^{-\lambda x}dx=1-\sum_{j=0}^{n-1}\frac{\lambda^j e^{-\lambda t}}{j!}t^{j}
		\end{equation}
		Also recall the standard Laplace transform pair: $\mathcal{L}\left\{\int_{0}^{t}f(u)du\right\}=\frac{1}{s}\mathcal{L}\left\{f(t)\right\}$ \citep[see e.g.][p.65]{muth1977transform}.
		Then, substituting Eq.\ref{aux1} and \ref{aux2}  where appropriate in Lemma~\ref{eq7} yields:
		\begin{align*}
				\left(\frac{\lambda}{\lambda+s} \right)^{P}\frac{\lambda}{s^{2}} &=  \lambda\frac{1}{s^{2}} \mathcal{L}\left\{\lambda^{P}\frac{t^{P-1}}{(P-1)!}e^{-\lambda t}\right\}\\
				&=\mathcal{L}\left\{\lambda\int_{0}^{t}\left[\int_{0}^{t}\lambda^{P}\frac{u^{P-1}}{(P-1)!}e^{-\lambda u} du \right]du\right\}\\
				&=\mathcal{L}\left\{\lambda\int_{0}^{t}\left[ 1-\sum_{j=0}^{P-1}\frac{\lambda^j e^{-\lambda u}}{j!}u^{j} \right] du\right\}\\
				&=\mathcal{L}\biggl\{ \lambda t - \lambda\biggl[\int_{0}^{t}\frac{\lambda^{0}e^{-\lambda u}}{0!}u^{0} du + \int_{0}^{t}\frac{\lambda e^{-\lambda u}}{1!}u du + \\
				& \dots + \int_{0}^{t}\frac{\lambda^{P-1}e^{-\lambda u}}{(P-1)!}u^{P-1}du  \biggr] \biggr\} \\
				&=\mathcal{L}\left\{ \lambda t - P + e^{-\lambda t}\sum_{i=0}^{P-1}\sum_{j=0}^{i}\frac{(\lambda t)^j}{j!} \right\} \\
				&=\mathcal{L}\left\{ \lambda t - P + e^{-\lambda t}\sum_{i=0}^{P-1}(n-i)\frac{(\lambda t)^j}{j!} \right\} \\
				&=\mathcal{L}\left\{ \lambda t - P + e^{-\lambda t}\left[\sum_{i=0}^{P-1}P\frac{(\lambda t)^i}{i!}-\sum_{i=0}^{P-1}i\frac{(\lambda t)^i}{i!}  \right] \right\}		 \nonumber
		\end{align*}
		Taking the inverse transform of both sides produces the sought identity
	\end{proof}
	
	\begin{remark}
		The proof of Lemma~\ref{eq2} features a double sum with linked indices, which was simplified based on the identity $\sum_{i=0}^{n-1}\sum_{j=0}^{i}f(j) =\sum_{i=0}^{n-1}(n-i)f(i)$. For more details on this identify see Appendix \ref{doublesum}.  
	\end{remark}

	\subsection{Laplace transform of \emph{cumulative} expected backlog}
	\label{totexpback}
	Taken together, Lemmas~\ref{eq7} and \ref{eq2} provide important intermediate results towards understanding how \emph{both sides} of Eq.\ref{eq1} might be arrived at.	The Laplace transform of \emph{cumulative expected} backlog, which corresponds to the left-hand side of Eq.~\ref{eq1}, is obtained from Lemma~\ref{eq7} invoking a standard result about the transform of an integral \citep[e.g.][p.258]{farlow2012introduction}:
	\begin{equation}
		\label{eq9}
		\begin{split}
			\mathcal{L}\left\{ E[\bar{B}(t)]\right\} &= \frac{1}{s}\mathcal{L}\left\{ E[B(t)]\right\}\\ 
			&=\frac{1}{s}\left(\frac{\lambda}{\lambda+s} \right)^{P}\frac{\lambda}{s^{2}}
		\end{split}
	\end{equation}

	\begin{lem}
		\label{thirdint}
			Let the image function of the \textit{cumulative expected} backlog in the complex frequency domain be as in Eq.~\ref{eq9}. The original (primitive) function in the time domain is
			\begin{align*}
					\mathcal{L}^{-1} \left\{\left(\frac{\lambda}{\lambda+s}\right)^P\frac{\lambda}{s^3} \right\} &=	\frac{\lambda t^2}{2}-Pt-\frac{P(P+1)}{2\lambda}-\frac{e^{-\lambda t}}{2\lambda}\biggl[P(P+1) \\
					&\sum_{i=0}^{P-1}\frac{(\lambda t)^i}{i!} -2P\sum_{i=0}^{P-2}\frac{(\lambda t)^{i+1}}{i!}+\sum_{i=0}^{P-3}\frac{(\lambda t)^{i+2}}{i!} -4P\frac{(\lambda t)^{P-1}}{(P-2)!} \biggr]
			\end{align*} 		
	\end{lem}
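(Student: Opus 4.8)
The plan is to obtain the stated time-domain expression by one further integration of the result already proved in Lemma~\ref{eq2}. Since Eq.~\ref{eq9} asserts $\mathcal{L}\{E[\bar{B}(t)]\}=\tfrac{1}{s}\,\mathcal{L}\{E[B(t)]\}$, the same transform-of-an-integral rule invoked for Eq.~\ref{eq9} gives $E[\bar{B}(t)]=\int_0^t E[B(u)]\,du$. I would therefore start from the compact form of Lemma~\ref{eq2}, $E[B(u)]=\lambda u-P+e^{-\lambda u}\sum_{i=0}^{P-1}(P-i)\tfrac{(\lambda u)^i}{i!}$, and integrate it termwise on $[0,t]$. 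The two polynomial pieces integrate at once to $\tfrac{\lambda t^2}{2}$ and $-Pt$, which already reproduces the first two summands of the claim.

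The substantive part is the exponential sum $\int_0^t e^{-\lambda u}\sum_{i=0}^{P-1}(P-i)\tfrac{(\lambda u)^i}{i!}\,du$. Here I would use the Erlang/incomplete-gamma identity underlying Eq.~\ref{aux2}, in the form $\int_0^t \tfrac{(\lambda u)^i}{i!}e^{-\lambda u}\,du=\tfrac{1}{\lambda}\bigl(1-e^{-\lambda t}\sum_{k=0}^{i}\tfrac{(\lambda t)^k}{k!}\bigr)$ (i.e.\ Eq.~\ref{aux2} with $n=i+1$, divided by $\lambda$). Weighting by $(P-i)$ and using $\sum_{i=0}^{P-1}(P-i)=\tfrac{P(P+1)}{2}$ collapses the constant contributions into a single term proportional to $\tfrac{P(P+1)}{2\lambda}$ and leaves the double sum $\tfrac{e^{-\lambda t}}{\lambda}\sum_{i=0}^{P-1}(P-i)\sum_{k=0}^{i}\tfrac{(\lambda t)^k}{k!}$.

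Next I would interchange the order of summation in that double sum — the manipulation already recorded in the remark following Lemma~\ref{eq2} and in Appendix~\ref{doublesum} — to obtain $\sum_{m=0}^{P-1}\bigl(\sum_{i=m}^{P-1}(P-i)\bigr)\tfrac{(\lambda t)^m}{m!}=\sum_{m=0}^{P-1}\tfrac{(P-m)(P-m+1)}{2}\tfrac{(\lambda t)^m}{m!}$. To split this single weighted sum into the three sums displayed in the statement, I would expand the quadratic weight as $(P-m)(P-m+1)=P(P+1)-2Pm+m(m-1)$ and absorb the falling factorials $m$ and $m(m-1)$ into $m!$, re-indexing so that $\sum_{m=0}^{P-1} m\,\tfrac{(\lambda t)^m}{m!}=\sum_{i=0}^{P-2}\tfrac{(\lambda t)^{i+1}}{i!}$ and $\sum_{m=0}^{P-1} m(m-1)\,\tfrac{(\lambda t)^m}{m!}=\sum_{i=0}^{P-3}\tfrac{(\lambda t)^{i+2}}{i!}$.

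The step I expect to be the main obstacle — and exactly the point on which the three expressions collected in Table~\ref{tab01} disagree — is the bookkeeping of the upper limits and of the boundary terms produced by these index shifts, together with the sign with which the constant $\tfrac{P(P+1)}{2\lambda}$ ultimately enters. Settling the truncation of each of the three sums, and deciding whether a residual low-order term such as $-4P\,\tfrac{(\lambda t)^{P-1}}{(P-2)!}$ genuinely survives or is an artefact of mismatched summation ranges (a term whose very shape, with $(P-2)!$ in the denominator, already signals a possible problem at small $P$), is precisely where an off-by-one can slip in. I would therefore cross-check the final formula independently — by direct partial-fraction inversion of $\bigl(\tfrac{\lambda}{\lambda+s}\bigr)^P\tfrac{\lambda}{s^3}$ for $P=1,2,3$, and by imposing the consistency condition $E[\bar{B}(0)]=0$ — before regarding the identity as established.
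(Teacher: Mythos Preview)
Your plan is essentially the paper's own route: integrate the time-domain expression from Lemma~\ref{eq2} via the standard pair $\mathcal{L}\{\int_0^t f\}=\tfrac{1}{s}\mathcal{L}\{f\}$, apply the Erlang identity Eq.~\ref{aux2} to each $\int_0^t (\lambda u)^i e^{-\lambda u}/i!\,du$, and reduce the resulting double sums with the identities of Appendix~\ref{doublesum}. Where you differ is organisation. The paper keeps the two pieces $P\sum(\cdot)$ and $-\sum i(\cdot)$ separate throughout and performs a long chain of index shifts (Appendix~\ref{longEqApp}); you instead keep the single weight $(P-i)$, evaluate the inner sum in closed form as $\sum_{i=m}^{P-1}(P-i)=\tfrac{(P-m)(P-m+1)}{2}$, and then split via the algebraic identity $(P-m)(P-m+1)=P(P+1)-2Pm+m(m-1)$. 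This is genuinely tidier and makes the bookkeeping you worry about almost trivial: the three re-indexings absorb the falling factorials exactly, with no leftover boundary terms.

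In fact your route delivers the constant as $+\tfrac{P(P+1)}{2\lambda}$ and produces \emph{no} residual term $-4P\,(\lambda t)^{P-1}/(P-2)!$, and your proposed cross-checks (partial fractions for $P=1,2,3$; the condition $E[\bar{B}(0)]=0$) confirm that version rather than the one stated in the lemma. The discrepancy is traceable to two slips in Appendix~\ref{longEqApp}: the sign of $\tfrac{P(P+1)}{2\lambda}$ flips between consecutive displayed lines without cause, and in the final step, where the upper limits of the three sums are raised by one, the compensating boundary terms are combined with the wrong signs (done correctly, they cancel identically). So your skepticism about the extra term and the sign is well placed, and your decomposition is the more reliable path to the answer.
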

	
	\begin{proof} Invoking the standard Laplace transform pair $\mathcal{L}\left\{\int_{0}^{t}f(u)du\right\}=\frac{1}{s}\mathcal{L}\left\{f(t)\right\}$, I relate Eq.\ref{eq9} and Lemma.~\ref{eq2} as follows:
		\begin{align*}
				 E[\bar{B}(t)] &=\int_{0}^{t}\lambda x - P + e^{-\lambda x}\left[\sum_{i=0}^{P-1}P\frac{(\lambda x)^i}{i!}-\sum_{i=0}^{P-1}i\frac{(\lambda x)^i}{i!}  \right] dx  \\				
				&= \frac{\lambda t^2}{2}-Pt+\frac{P}{\lambda}\sum_{i=0}^{P-1} \left[-\frac{(\lambda t)^i}{i!}e^{-\lambda t} + \int_{0}^{t} \lambda^i 	\frac{x^{i-1}}{(i-1)!} e^{-\lambda x}dx \right]   \\
				&\textcolor{white}{=}-\frac{1}{\lambda}\sum_{i=0}^{P-1} \biggl[-i\frac{(\lambda t)^i}{i!}e^{-\lambda t} + i\int_{0}^{t} \lambda^i \frac{x^{i-1}}{(i-1)!} e^{-\lambda x}dx \biggr]  \\
				&=\frac{\lambda t^2}{2}-Pt-\frac{P}{\lambda}\sum_{i=0}^{P-1}\frac{(\lambda t)^i}{i!}e^{-\lambda t} + \frac{P}{\lambda}\sum_{i=0}^{P-1} \biggl[1-\sum_{j=0}^{i-1}\frac{(\lambda t)^j}{j!}e^{-\lambda t} \biggr] \\
				&\textcolor{white}{=}+\frac{1}{\lambda}\sum_{i=0}^{P-1}i\frac{(\lambda t)^i}{i!}e^{-\lambda t}  - \frac{1}{\lambda}\sum_{i=0}^{P-1} i \biggl[ 1-\sum_{j=0}^{i-1}\frac{(\lambda t)^j}{j!}e^{-\lambda t} \biggr] \\
				&= \frac{\lambda t^2}{2}-Pt-\frac{P(P+1)}{2\lambda}-\frac{e^{-\lambda t}}{\lambda}\biggl[P \sum_{i=0}^{P-1}\frac{(\lambda t)^{i}}{i!} - \sum_{i=0}^{P-2}\frac{(\lambda t)^{i+1}}{i!} \\
				&\textcolor{white}{=}+ P \sum_{i=0}^{P-2}(P-1+i)\frac{(\lambda t)^{i}}{i!}\biggr] + \frac{1}{\lambda}\sum_{i=0}^{P-1}i \left[\sum_{j=0}^{i-1} \frac{(\lambda t)^j}{j!}e^{-\lambda t}  \right] \\
				&=\frac{\lambda t^2}{2}-Pt-\frac{P(P+1)}{2\lambda}-\frac{e^{-\lambda t}}{\lambda}\biggl[P^2\sum_{i=0}^{P-2}\frac{(\lambda t)^{i}}{i!}-(P+1) \\
				&\textcolor{white}{=}\sum_{i=0}^{P-3}\frac{(\lambda t)^{i+1}}{i!} +\frac{(\lambda t)^{P-1}}{(P-1)!}\biggr]+\frac{1}{\lambda}\sum_{i=0}^{P-2}\biggl[\frac{P(P-1)}{2} -\frac{i(i+1)}{2}\biggr]\frac{(\lambda t)^i}{i!}e^{-\lambda t}\\
				&= \frac{\lambda t^2}{2}-Pt-\frac{P(P+1)}{2\lambda}-\frac{e^{-\lambda t}}{2\lambda}\biggl[P(P+1)\sum_{i=0}^{P-2}\frac{(\lambda t)^i}{i!} \\
				&\textcolor{white}{=}-2P\sum_{i=0}^{P-3}\frac{(\lambda t)^{i+1}}{i!}+\sum_{i=0}^{P-4}\frac{(\lambda t)^{i+2}}{i!} +2\frac{(\lambda t)^{P-1}}{(P-1)!} \biggr] \\
		\end{align*}	
	Appendix \ref{longEqApp} provides intermediate steps that are omitted here for brevity. The identity in the Lemma is obtained from the last identity in the proof by leveraging the fact that $\sum_{i=0}^{n}f(i)=\sum_{i=0}^{n-1}f(i)+f(n)$.		
	\end{proof}

	\section{Closing remarks}
	The proposed verification process yields a result which is very close to, but not the same as the alternatives summarised in Tab.~\ref{tab01}. Specifically, Lemma~\ref{thirdint}, differ from Eq.~\ref{eq1}, for one additional term which seems to be absent in the original references \citep[e.g.][Eq.~2.13]{Grubb2}.
	
	It is worth noting, however, the satisfying verification of intermediate results concerning the Laplace transform of \emph{expected backlog} in Section~\ref{expback}. \citet[][p.363]{Grubb.1999} formulates, without proving it, a theorem akin to Lemma~\ref{eq7}. \citet[p.221]{Grubb1998} arrives at a conclusion analogous to Lemma~\ref{eq2}, although by a rather different avenue, which involves complex inversion techniques for finding the inverse Laplace transform. This note contributes a simpler strategy in proving Lemma~\ref{eq2}, which avoids the need for a complex inversion -- a desirable feature according to some literature \citep[see e.g.][p.102]{muth1977transform}.
		
	An incidental benefit of making reference to the standard pair $\mathcal{L}\left\{\int_{0}^{t}f(u)du\right\}=\frac{1}{s}\mathcal{L}\left\{f(t)\right\}$ in the proof of Lemma~\ref{eq2} is that of pinpointing a possible inaccuracy in Eq.\ref{eq1}, which features \textit{indefinite} integrals.
	
%
	
	Whilst it is not the ambition of this note to claim a definitive answer, it is hoped that the process described brings greater clarity on how the Laplace transform pair of interest might be arrived at. To warrant further scrutiny, an outline of the proposed verification strategy was submitted to a specialist online discussion forum where, at the moment of writing, remains amenable to voluntary discussion and input from the public \href{https://math.stackexchange.com/q/4786496}{(math.stackexchange.com/q/4786496)}. 
	
	\appendix
	\begin{appendices}
		\section{Transform of sums of random variables}
		\label{AppendixConvol}
		This section is meant to illustrate Proposition~\ref{propExp}. Following \citet[][p.196-197]{muth1977transform}, I show that (1) the Laplace transform of a random variable $X$ can be expressed in terms of expected value of a function of such variable; and (2) the density function of the sum of two independent random variables corresponds to the \textit{convolution} of the density functions of these random variables.
		In particular, the expected value of a function $g$ of a random variable $X$ is, by definition:
		\begin{equation}
			E[g(X)]=\int_{0}^{\infty}g(t)f_X(t)dt   
		\end{equation}
		
		For $g(X)=e^{-sX}$ we can write
		
		\begin{equation}
			E[e^{-sX}]=\mathcal{L}\left\{f_X(t) \right\} 
		\end{equation} 
		
		If, for example, $X$ and $Y$ are two independent random variables with probability density function $f_{X}(t)$ and $f_{Y}(t)$, respectively, then their sum $Z = X + Y$ has probability density function $f_{X+Y}(t)$, the Laplace transform of the density function of $Y$ is:		
		\begin{align}
				\mathcal{L}\left\{ f_{Z}(t)\right\}=E\left[ e^{-s(X+Y)}\right] &=E\left[  	e^{-sX}\right]E\left[e^{-sY}\right] \nonumber \\
				&= \mathcal{L}\left\{ f_X(t) \right\} \mathcal{L}\left\{ f_Y(t) \right\}=\tilde{f}_{X}\tilde{f}_{Y}		
		\end{align}
		It follows that
		\begin{align}
				f_{X+Y}(t) &= \mathcal{L}^{-1}\left\{ \tilde{f}_{X}(s)\tilde{f}_{Y}(s) \right\} \nonumber \\
				&= f_X(t) \ast f_Y(t)  				
		\end{align}
		
		If $X$ and $Y$ follow the negative exponential density function i.e. $f_X(t)=f_Y(t)=\lambda e^{-\lambda t}$, the Laplace transform of their sum $Z = X + Y$ is as follows:
		\begin{align*}
				\mathcal{L}\left\{f_Z(t) \right\} &= E\left[  e^{-sX}\right]E\left[e^{-sY}\right] \\
				&=\left( \frac{\lambda}{s + \lambda} \right)^2	
				\nonumber
		\end{align*}
		The inverse Laplace transform of a 2-fold convoluted negative exponential density function is
		\begin{align*}
			f_Z(t) & =\mathcal{L}^{-1}\left\{\left( \frac{\lambda}{s + \lambda} \right)\left( \frac{\lambda}{s + \lambda} \right)\right\} \\
			&=\lambda e^{-\lambda t} \ast \lambda e^{-\lambda t} \\
			&=\int_0^t \lambda e^{-\lambda (t-x)} \lambda e^{-\lambda t} dx \\
			&= \lambda^2 e^{-\lambda t} \int_0^t  1 dx \\
			&=\lambda^2 te^{-\lambda t}		
		\end{align*}
		which is an instance of the more general result \citep[][p.198]{muth1977transform}:
		\begin{equation}
			\mathcal{L}^{-1}\left\{ \left(\frac{\lambda}{s+\lambda}\right)^n  \right\}=\lambda^n\frac{t^{n-1}}{(n-1)!}e^{-\lambda t}
		\end{equation}
		
		While the illustrative example involves just two variables, it shows the equivalence between raising the transform of an exponential density function to the $n$--th power and taking the Laplace transform of the expected value of the sum of $n$ random variables that are exponentially distributed with the same parameter $\lambda$. A more general version of this result features in Lemma~\ref{eq4}, the Laplace transform of backlogs.

		\section{Double summations}
		\label{doublesum}
		Throughout the replication process described in Section\ref{replicat}, I've encountered double summations where the inner index is linked to the outer one, and needed simplifying. I was unable to find ready-to-use results about double sums, which forced me to deduce the following equivalences from ``tabulating'' the double summations for small values of $n$:
			\begin{align}
				\sum_{i=0}^{n-1}\sum_{j=0}^{i}f(j) &=\sum_{i=0}^{n-1}(n-i)f(i) \label{A1}\\
				\sum_{i=0}^{n-1}\sum_{j=0}^{i-1}f(j) &=\sum_{i=0}^{n-2}(n-1-i)f(i) \label{A2} \\ 
				\sum_{i=0}^{n-1}i\left[\sum_{j=0}^{i-1}f(j) \right] &=\sum_{j=0}^{n-2} \left[ \frac{n(n-1)}{2}-\frac{j(j+1)}{2}\right]f(j) 
			\end{align}
		For illustration, in Fig.\ref{fig:doublesumtab} I provide an example of how intuition developed from a simple tabulation process for the first equation \ref{A1} mentioned above.
		
		\begin{figure}
			\includegraphics[width=1.1\linewidth]{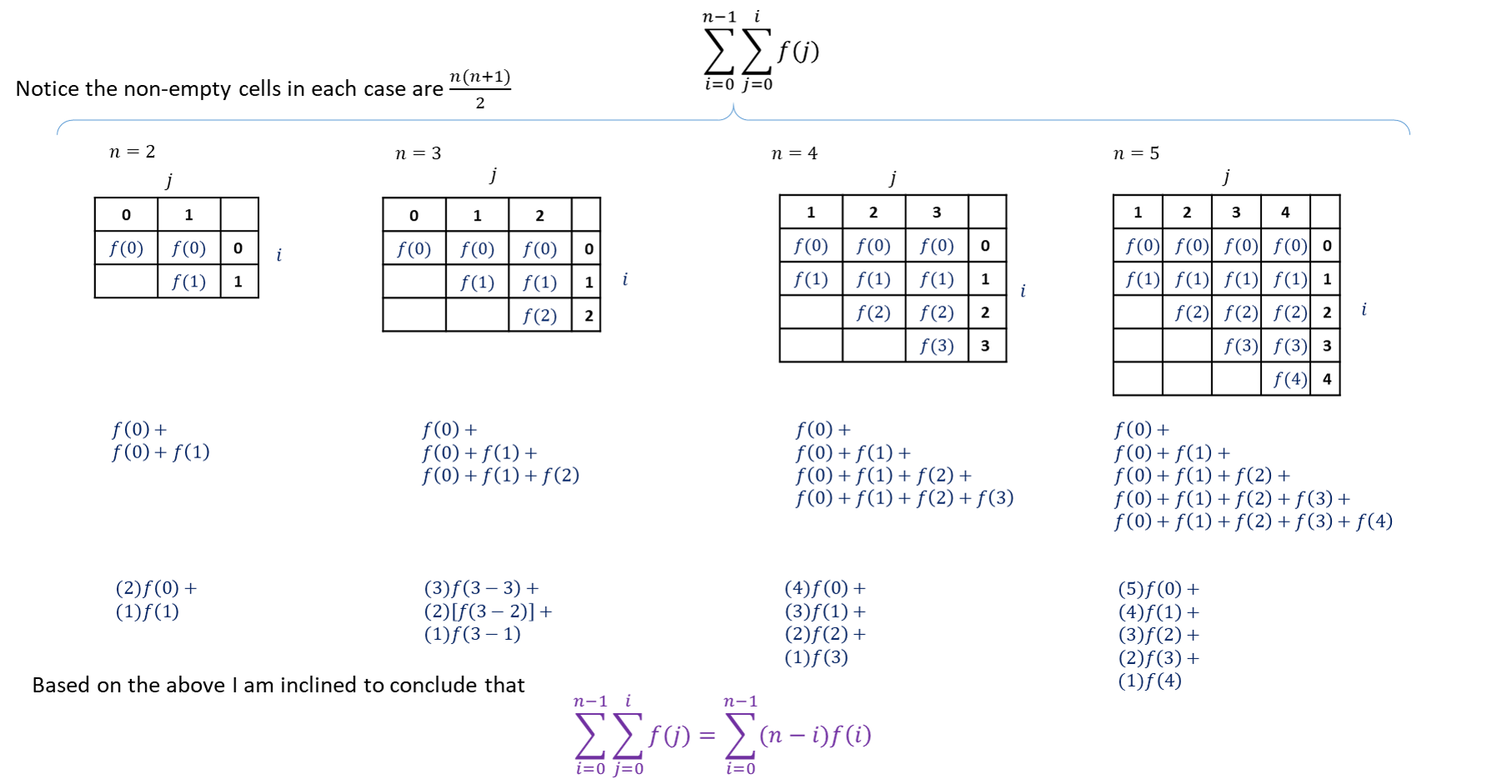}
			\caption[Simplifying double sums with linked indices from tabulations]{simplifying double sums with linked indices by tabulations}
			\label{fig:doublesumtab}
		\end{figure}

		A more rigorous approach is due to \citet{MathSE02}, who addressed a query I have submitted in an online community of experts, with a view to verifying the reliability of the above identities in the absence of other sources: \href{https://math.stackexchange.com/q/4789840/1234572}{math.stackexchange.com/q/4789840/1234572}. 
		
		Specifically, the first two identities are verified as follows:
		\begin{align*}
			{\sum_{i=0}^{n-1}\sum_{j=0}^if(j)}
			&=\sum_{0\leq j\leq i\leq n-1}f(j)\\
			&=\sum_{j=0}^{n-1}f(j)\sum_{i=j}^{n-1}1\\
			&=\sum_{j=0}^{n-1}f(j)\left((n-1)-(j-1)\right)\\
			&=\sum_{j=0}^{n-1}f(j)(n-j)\\
			&\,\,{=\sum_{i=0}^{n-1}f(i)(n-i)}
		\end{align*}
		which agrees with the claim and provides useful insights for verifying Eq.\ref{A2}:
		\begin{align*}
			{\sum_{i=0}^{n-1}\sum_{j=0}^{i-1}f(j)}
			&=\sum_{i=1}^{n-1}\sum_{j=0}^{i-1}f(j)\\
			&=\sum_{i=0}^{n-2}\sum_{j=0}^i f(j)\\
			&\,\,{=\sum_{i=0}^{n-2}f(i)(n-1-i)}
		\end{align*}

		\section{Integral development: cumulative backlog}
		\label{longEqApp}
		In this section I develop in full the in integral in Lemma~\ref{thirdint} to ensure that the steps taken can be  scrutinised. 
		
		Double summations with linked indices were simplified according to the rules shown in Appendix \ref{doublesum}.		The results in Eqs.\ref{aux1} and \ref{aux2} also came in handy, especially at the start of the process. 	
		Finally, on several occasions I applied the index shift rule for summations i.e.,  $\sum_{i=s}^{n}f(i)=\sum_{i=s+p}^{n+p}f(i-p)$, with the understanding that $i=s+p=0$ for $p<0$ and $s=0$.
		
			\begin{landscape}
			\begin{align}
				&\int_{0}^{t}\lambda x - P + e^{-\lambda x}\left[\sum_{i=0}^{P-1}P\frac{(\lambda x)^i}{i!}-\sum_{i=0}^{P-1}i\frac{(\lambda x)^i}{i!}  \right] dx=\frac{\lambda t^2}{2}-Pt+P\sum_{i=0}^{P-1}\frac{\lambda^i}{i!}\int_{0}^{t} x^i e^{-\lambda x}dx - \sum_{i=0}^{P-1}i\frac{\lambda^i}{i!} \int_{0}^{t} x^i e^{-\lambda x}dx \nonumber\\
				&=\frac{\lambda t^2}{2}-Pt+\frac{P}{\lambda}\sum_{i=0}^{P-1} \left[-\frac{(\lambda t)^i}{i!}e^{-\lambda t} + \int_{0}^{t} \lambda^i 	\frac{x^{i-1}}{(i-1)!} e^{-\lambda x}dx \right] - \frac{1}{\lambda}\sum_{i=0}^{P-1} \left[-i\frac{(\lambda t)^i}{i!}e^{-\lambda t} + i\int_{0}^{t} \lambda^i \frac{x^{i-1}}{(i-1)!} e^{-\lambda x}dx \right] \nonumber\\
				&=\frac{\lambda t^2}{2}-Pt-\frac{P}{\lambda}\sum_{i=0}^{P-1}\frac{(\lambda t)^i}{i!}e^{-\lambda t}  +\frac{P}{\lambda}\sum_{i=0}^{P-1} 	\left[1-\sum_{j=0}^{i-1}\frac{(\lambda t)^j}{j!}e^{-\lambda t} \right] + \frac{1}{\lambda}\sum_{i=0}^{P-1}i\frac{(\lambda t)^i}{i!}e^{-\lambda t}  - \frac{1}{\lambda}\sum_{i=0}^{P-1} i \left[ 1-\sum_{j=0}^{i-1}\frac{(\lambda t)^j}{j!}e^{-\lambda t} \right] \nonumber\\			
				&=\frac{\lambda t^2}{2}-Pt-\frac{P}{\lambda}\sum_{i=0}^{P-1}1-\frac{1}{\lambda}\sum_{i=0}^{P-1}i 	-\frac{P}{\lambda}\sum_{i=0}^{P-1}\sum_{j=0}^{i-1}\frac{(\lambda t)^j}{j!}e^{-\lambda t}-\frac{P}{\lambda}\sum_{i=0}^{P-1}\frac{(\lambda t)^i}{i!}e^{-\lambda t}+\frac{1}{\lambda}\sum_{i=0}^{P-1}i\frac{(\lambda t)^i}{i!}e^{-\lambda t}+\frac{1}{\lambda}\sum_{i=0}^{P-1} i \left[\sum_{j=0}^{i-1}\frac{(\lambda t)^j}{j!}e^{-\lambda t} \right] \nonumber\\	
				&=\frac{\lambda t^2}{2}-Pt+\frac{P^2}{\lambda}-\frac{P(P-1)}{2\lambda}-\frac{e^{-\lambda t}}{\lambda}\left[P \sum_{i=0}^{P-1}\frac{(\lambda t)^{i}}{i!}	- \sum_{i=0}^{P-1-1}\frac{(\lambda t)^{i+1}}{(i+1-1)!} + P \sum_{i=0}^{P-2}(P-1+i)\frac{(\lambda t)^{i}}{i!}\right] + \frac{1}{\lambda}\sum_{i=0}^{P-1}i \left[\sum_{j=0}^{i-1} \frac{(\lambda t)^j}{j!}e^{-\lambda t}  \right] \nonumber\\
				&=\frac{\lambda t^2}{2}-Pt+\frac{P(P+1)}{2\lambda}-\frac{e^{-\lambda t}}{\lambda}\left[ P \sum_{i=0}^{P-1}\frac{(\lambda t)^{i}}{i!} - 	\sum_{i=0}^{P-2}\frac{(\lambda t)^{i+1}}{i!} + P(P-1)\sum_{i=0}^{P-2}\frac{(\lambda t)^{i}}{i!} - P\sum_{i=0}^{P-2-1}\frac{(\lambda t)^{i+1}}{(i+1-1)!}\right]+ \frac{1}{\lambda}\sum_{i=0}^{P-1}i \left[\sum_{j=0}^{i-1} \frac{(\lambda t)^j}{j!}e^{-\lambda t}  \right]\nonumber\\				
				&=\frac{\lambda t^2}{2}-Pt+\frac{P(P+1)}{2\lambda}-\frac{e^{-\lambda t}}{\lambda}\left[\left( P \sum_{i=0}^{P-1}\frac{(\lambda t)^{i}}{i!} + 	P(P-1)\sum_{i=0}^{P-2}\frac{(\lambda t)^{i}}{i!} \right) - \left(\sum_{i=0}^{P-2}\frac{(\lambda t)^{i+1}}{i!} + P\sum_{i=0}^{P-3}\frac{(\lambda t)^{i+1}}{i!}	\right)	 \right] + \frac{1}{\lambda}\sum_{i=0}^{P-1}i \left[\sum_{j=0}^{i-1} \frac{(\lambda t)^j}{j!}e^{-\lambda t}\right]\nonumber\\			
				& =\frac{\lambda t^2}{2}-Pt+\frac{P(P+1)}{2\lambda}-\frac{e^{-\lambda t}}{\lambda}\left[ \left( P^2 \sum_{i=0}^{P-2}\frac{(\lambda t)^{i}}{i!}+ 	P\frac{(\lambda t)^{P-1}}{(P-1)!} \right) - \left( (P+1) \sum_{i=0}^{P-3}\frac{(\lambda t)^{i+1}}{i!} + \frac{(\lambda t)^{(P-2)+1}}{(P-2)!} \right) \right] + \frac{1}{\lambda}\sum_{i=0}^{P-1}i \left[\sum_{j=0}^{i-1} \frac{(\lambda t)^j}{j!}e^{-\lambda t}  \right] \nonumber\\			
				&=\frac{\lambda t^2}{2}-Pt+\frac{P(P+1)}{2\lambda}-\frac{e^{-\lambda t}}{\lambda}\left[P^2 \sum_{i=0}^{P-2}\frac{(\lambda t)^{i}}{i!} - 	(P+1)\sum_{i=0}^{P-3}\frac{(\lambda t)^{i+1}}{i!} +P\frac{(\lambda t)^{P-1}}{(P-1)!} - (P-1)\frac{(\lambda t)^{P-1}}{(P-1)!} \right] + \frac{1}{\lambda}\sum_{i=0}^{P-1}i \left[\sum_{j=0}^{i-1} \frac{(\lambda t)^j}{j!}e^{-\lambda t}  \right]\nonumber\\						
				&=\frac{\lambda t^2}{2}-Pt+\frac{P(P+1)}{2\lambda}-\frac{e^{-\lambda t}}{\lambda}\left[P^2\sum_{i=0}^{P-2}\frac{(\lambda 	t)^{i}}{i!}-(P+1)\sum_{i=0}^{P-3}\frac{(\lambda t)^{i+1}}{i!} +\frac{(\lambda t)^{P-1}}{(P-1)!}\right]+\frac{1}{\lambda}\sum_{i=0}^{P-2}\left[\frac{P(P-1)}{2}-\frac{i(i+1)}{2}\right]\frac{(\lambda t)^i}{i!}e^{-\lambda t} \nonumber\\	
				&=\frac{\lambda t^2}{2}-Pt+\frac{P(P+1)}{2\lambda}-\frac{e^{-\lambda t}}{\lambda}\left[P^2\sum_{i=0}^{P-2}\frac{(\lambda t)^{i}}{i!} - 	\frac{P(P-1)}{2}\sum_{i=0}^{P-2}\frac{(\lambda t)^{i}}{i!}+\frac{1}{2}\sum_{i=0}^{P-2}(i+1)\frac{(\lambda t)^{i}}{(i-1)!} - (P+1)\sum_{i=0}^{P-3}\frac{(\lambda t)^{i+1}}{i!}+\frac{(\lambda t)^{P-1}}{(P-1)!} \right] \nonumber\\		
				&=\frac{\lambda t^2}{2}-Pt+\frac{P(P+1)}{2\lambda}-\frac{e^{-\lambda t}}{\lambda}\left[\left( \frac{2P^2-P^2+P}{2}\right)\sum_{i=0}^{P-2}\frac{(\lambda 	t)^{i}}{i!}+\frac{1}{2}\sum_{i=0}^{P-2}i\frac{(\lambda t)^{i}}{(i-1)!}+\frac{1}{2}\sum_{i=0}^{P-2}\frac{(\lambda t)^{i}}{(i-1)!}-(P+1)\sum_{i=0}^{P-3}\frac{(\lambda t)^{i+1}}{i!}+\frac{(\lambda t)^{P-1}}{(P-1)!}  \right] \nonumber \\		
				&=\frac{\lambda t^2}{2}-Pt+\frac{P(P+1)}{2\lambda}-\frac{e^{-\lambda t}}{\lambda}\left[ \frac{P(P+1)}{2}\sum_{i=0}^{P-2}\frac{(\lambda 	t)^{i}}{i!}+\frac{1}{2}\sum_{i=0}^{P-2-1}(i+1)\frac{(\lambda t)^{i+1}}{(i+1-1)!}+\frac{1}{2}\sum_{i=0}^{P-2-1}\frac{(\lambda t)^{i+1}}{(i+1-1)!}-(P+1)\sum_{i=0}^{P-3}\frac{(\lambda t)^{i+1}}{i!}+\frac{(\lambda t)^{P-1}}{(P-1)!} \right] \nonumber \\		
				&=\frac{\lambda t^2}{2}-Pt+\frac{P(P+1)}{2\lambda}-\frac{e^{-\lambda t}}{\lambda}\left[\frac{P(P+1)}{2}\sum_{i=0}^{P-2}\frac{(\lambda 	t)^{i}}{i!}-\frac{2P}{2}\sum_{i=0}^{P-3}\frac{(\lambda t)^{i+1}}{i!}-\frac{1}{2}\sum_{i=0}^{P-3}\frac{(\lambda t)^{i+1}}{i!}+\frac{1}{2}\sum_{i=0}^{P-3}(i+1)\frac{(\lambda t)^{i+1}}{i!}+\frac{(\lambda t)^{P-1}}{(P-1)!} \right] \nonumber\\
				&=\frac{\lambda t^2}{2}-Pt+\frac{P(P+1)}{2\lambda}-\frac{e^{-\lambda t}}{\lambda}\left[\frac{P(P+1)}{2}\sum_{i=0}^{P-2}\frac{(\lambda t)^{i}}{i!}-\frac{2P}{2}\sum_{i=0}^{P-3}\frac{(\lambda t)^{i+1}}{i!}+\frac{1}{2}\sum_{i=0}^{P-3-1}i\frac{(\lambda t)^{i+1+1}}{(i+1-1)!}+\frac{(\lambda t)^{P-1}}{(P-1)!} \right] \nonumber\\		
				&=\frac{\lambda t^2}{2}-Pt-\frac{P(P+1)}{2\lambda}-\frac{e^{-\lambda t}}{\lambda}\left[\frac{P(P+1)}{2}\sum_{i=0}^{P-2}\frac{(\lambda t)^i}{i!}-\frac{2P}{2}\sum_{i=0}^{P-3}\frac{(\lambda t)^{i+1}}{i!}+\frac{1}{2}\sum_{i=0}^{P-4}\frac{(\lambda t)^{i+2}}{i!}+\frac{(\lambda t)^{P-1}}{(P-1)!} \right] \nonumber \\
				&=\frac{\lambda t^2}{2}-Pt-\frac{P(P+1)}{2\lambda}-\frac{e^{-\lambda t}}{2\lambda}\left[P(P+1)\sum_{i=0}^{P-2}\frac{(\lambda t)^i}{i!}-2P\sum_{i=0}^{P-3}\frac{(\lambda t)^{i+1}}{i!}+\sum_{i=0}^{P-4}\frac{(\lambda t)^{i+2}}{i!}+2\frac{(\lambda t)^{P-1}}{(P-1)!} \right] \label{eq10} \\
				&=\frac{\lambda t^2}{2}-Pt-\frac{P(P+1)}{2\lambda}-\frac{e^{-\lambda t}}{2\lambda}\left[ P(P+1)\sum_{i=0}^{P-1}\frac{(\lambda t)^i}{i!}-P(P+1)\frac{(\lambda t)^{P-1}}{(P-1)!}-2P\sum_{i=0}^{P-2}\frac{(\lambda t)^{i+1}}{i!}-2P\frac{(\lambda t)^{P-1}}{(P-2)!}   +\sum_{i=0}^{P-3}\frac{(\lambda t)^{i+2}}{i!}-\frac{(\lambda t)^{P-1}}{(P-3)!} +2\frac{(\lambda t)^{P-1}}{(P-1)!} \right] \nonumber \\
				&=\frac{\lambda t^2}{2}-Pt-\frac{P(P+1)}{2\lambda}-\frac{e^{-\lambda t}}{2\lambda}\left[P(P+1)\sum_{i=0}^{P-1}\frac{(\lambda t)^i}{i!} -2P\sum_{i=0}^{P-2}\frac{(\lambda t)^{i+1}}{i!}+\sum_{i=0}^{P-3}\frac{(\lambda t)^{i+2}}{i!}-4P\frac{(\lambda t)^{P-1}}{(P-2)!} \right] \label{eq10b} 
		\end{align}
		\end{landscape}			
	\end{appendices}

	\bibliography{test-bib}
\end{document}